\newcommand{\vp}{\varepsilon}
\newcommand{\bb}[1]{\mathbb{#1}}
\newcommand{\cl}[1]{\mathcal{#1}}
\newcommand{\ovl}{\overline}
\theoremstyle{plain}
\newtheorem{thm}{Theorem}[section]
\newtheorem{pro}[thm]{Proposition}
\theoremstyle{definition}
\newtheorem{dfn}[thm]{Definition}
\theoremstyle{remark}
\newtheorem{rem}[thm]{Remark}
\numberwithin{equation}{section}
\def\tilde{\widetilde}
\renewcommand{\tilde}{\widetilde}
\def\NN{\bb N}
\def\Z{\bb Z}
\def\NN{\bb N}
\begin{document}

\title{Quantum expanders and growth of
 group representations}

\author{by\\
Gilles Pisier\footnote{Partially supported   by ANR-2011-BS01-008-01.}\\
Texas A\&M University\\
College Station, TX 77843, U. S. A.\\
and\\
Universit\'e Paris VI\\
Inst. Math. Jussieu, \'Equipe d'Analyse Fonctionnelle, Case 186, \\
75252
Paris Cedex 05, France}

 \maketitle

\begin{abstract}
Let $\pi$ be a finite dimensional unitary representation of a group $G$ with
a generating symmetric $n$-element set $S\subset G$. Fix $\vp>0$.
Assume that the spectrum of $|S|^{-1}\sum_{s\in S} \pi(s) \otimes \overline{\pi(s)}$
is included in $ [-1, 1-\vp]$ (so there is a spectral gap $\ge \vp$).
Let $r'_N(\pi)$ be the number of distinct irreducible representations
of dimension $\le N$ that  appear in $\pi$. Then let $R_{n,\vp}'(N)=\sup r'_N(\pi)$ where the supremum runs over all $\pi$ with ${n,\vp}$ fixed.
We prove that there are positive constants $\delta_\vp$ and $c_\vp$
such that,
for all sufficiently large integer $n$ (i.e. $n\ge n_0$   with $n_0$ depending
on   $\vp$) and for all  $N\ge 1$, we have $\exp{\delta_\vp nN^2} \le R'_{n,\vp}(N)\le \exp{c_\vp nN^2}$. The same bounds hold if, in $r'_N(\pi)$, we 
count only the number of distinct irreducible representations
of dimension exactly $= N$.
\end{abstract}
\section{Introduction}
  We wish to formulate and answer a natural
extension of a question raised explicitly
by Wigderson in several lectures (see e.g. \cite[p.59]{Wi}) and also implicitly in \cite{MW}.  
Although the variant that we answer seems to be much easier,   it may shed some light
on the original question. Wigderson's question concerns the growth of the number $r_N(G)$ of distinct irreducible representations
of dimension $\le N$ that may appear
on a finite group $G$ when the order of $G$ is arbitrarily large and all that one knows is that $G$ admits
a generating set $S$ of $n$ elements for which the Cayley graph forms an
expander with a fixed spectral gap $\vp>0$. The problem is to find
the best bound of the form $r_N(G)\le R(N) $ with $R(N)$ independent
of the order of $G$ (but depending on $n,\vp$). 
We consider a more general framework: the finite group $G$ is replaced by a finite dimensional representation $\pi$ (playing the role of the regular representation $\lambda_G$
for finite groups) such that the representation $\pi\otimes \bar \pi$ admits a spectral gap, meaning that the trivial representation is isolated with a gap
$\ge \vp$ from the other irreducible components of $\pi\otimes \bar \pi$. 
When $\pi=\lambda_G$ we recover the previous notion of spectral gap.
Let then $r'_N(\pi)$ be the number of distinct irreducible representations
of dimension $\le N$ appearing in $\pi$ (note that $r_N(G)=r'_N(\lambda_G)$), and let $R'(N)$ denote the least upper bound $r'_N(\pi)\le R'(N)$ when the only restriction on $\pi$ is
that $n,\vp$ remain fixed (but the dimension of $\pi$ is arbitrary).
We observe that
the previously known bound for $R(N)$ namely $R(N)=e^{O(nN^2)}$
is also valid for  $R'(N) $ and also that $R(N)\le R'(N) $. 
Our main result, which follows from
the metric entropy estimate for quantum expanders in \cite{P}, is that this bound for $R'(N)$ is sharp:  there is
$\delta>0$ such that 
for all $n$   large enough (i.e. $\forall n\ge n_0(\vp)$) we have  $R'(N) \ge e^{\delta nN^2}$ for all $N$.

The term ``quantum expander" was coined in \cite{Ha,BA1,BA2} to which we refer for background  (see also \cite{Har,HHar}).

\section{Main result}
 Let $G$ be any group with a  finite generating set $S\subset G$
with $|S|=n$. For any unitary representation
$\pi:\ G \to H_\pi$ we set
$$\lambda(\pi ,S)=n^{-1}\sup\{ \Re\ \langle\sum_{s\in S} \pi(s)\xi,\xi\rangle\mid \xi\in 
{H_{\pi}^{{\rm inv}}}^\perp, \|\xi\|_{H_\pi}=1\}.$$
 where
$ {H_\pi^{{\rm inv}}}\subset H_\pi$ denotes the subspace of all $\pi$-invariant vectors. \\
When $S$ is symmetric, $\sum_{s\in S} \pi(s)$ being selfadjoint, the real part sign $\Re$ can be omitted.\\
We then set
$$\vp(\pi ,S)=1- \lambda(\pi ,S). $$
It will be useful to record here the elementary observation that
if $\pi$ is unitarily equivalent to the direct sum $\oplus_{i\in I} \pi_i$
of a family of unitary representations, then $\lambda(\pi ,S)=\sup_{i\in I}\lambda(\pi_i ,S)$ and hence
\begin{equation}\label{bd0}\vp(\pi ,S) =\inf_{i\in I}\vp(\pi_i ,S).\end{equation}
In particular, if $\pi_1$ is contained in $\pi_2$, then $\vp(\pi_1 ,S)\ge \vp(\pi_2 ,S)$.

We denote
$$\vp(G,S)=\inf\{\vp(\pi ,S)\}
$$
where
the infimum runs over all unitary representations
$\pi:\ G \to H_\pi$. 
Thus the condition $$\vp(G,S)>0$$ means that $G$
has Kazhdan's ``property (T)", (or in otherwords  is a Kazhdan-group), see \cite{BHV} for more background.

We start by the following result somewhat implicitly 
due to S. Wassermann \cite{Was} and explicitly proved in detail in \cite{HRV}.

\begin{pro}[\cite{Was,HRV}] \label{p1} For any $\vp>0$ there is a constant $c_\vp$
such that for any $n$, any group   $G$ and any $S\subset G$ with $|S|=n$ 
such that $\vp(G,S)\ge \vp$, 
the number $r_N(G)$ of distinct irreducible unitary representations
$\sigma:\ G\to B(H_\sigma)$ with $\dim(H_\sigma)\le N$   is majorized as follows:
\begin{equation}\label{bd1}
r_N(G)\le \exp{(c_\vp nN^2)}.\end{equation}
Of course, here distinct means up to unitary equivalence.
\end{pro}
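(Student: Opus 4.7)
The plan is to stratify the irreducibles by their dimension and, for each fixed $d\le N$, count the inequivalent irreducibles of dimension $d$ via a metric-entropy argument in $U(d)^n$. The core of the argument is a rigidity estimate that converts the spectral gap hypothesis $\vp(G,S)\ge\vp$ into a quantitative repulsion: two inequivalent $d$-dimensional irreducibles correspond to $U(d)$-orbits in $U(d)^n$ that lie at Hilbert--Schmidt distance at least $\vp\sqrt d$ from one another. A standard Riemannian volume count in $U(d)^n$ then bounds the number of such orbits by $\exp(c_\vp nd^2)$, and summing over $d\le N$ absorbs the polynomial loss into the exponential constant.

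For the rigidity, fix inequivalent irreducibles $\sigma,\sigma'$ of dimension $d$. By Schur's lemma, an invariant vector of $\sigma\otimes\overline{\sigma'}$ would yield a nonzero intertwiner $H_{\sigma'}\to H_\sigma$, which cannot exist; hence $\sigma\otimes\overline{\sigma'}$ has no nonzero invariant vector. Applying the hypothesis to this representation gives
$$\Bigl\| n^{-1}\sum_{s\in S}\sigma(s)\otimes\overline{\sigma'(s)}\Bigr\|\le 1-\vp.$$
Under the canonical identification of $H_\sigma\otimes\overline{H_{\sigma'}}$ with the space of Hilbert--Schmidt operators $T:H_{\sigma'}\to H_\sigma$, the tensor product $\sigma(s)\otimes\overline{\sigma'(s)}$ acts by $T\mapsto \sigma(s)T\sigma'(s)^*$ and the operator norm becomes the Hilbert--Schmidt norm $\|\cdot\|_2$, so
$$\Bigl\| n^{-1}\sum_{s\in S}\sigma(s)T\sigma'(s)^*\Bigr\|_{2}\le (1-\vp)\|T\|_{2},\quad \forall T.$$
Using the identity $\sigma(s)T\sigma'(s)^*-T=[\sigma(s)T-T\sigma'(s)]\sigma'(s)^*$ together with the unitary invariance of $\|\cdot\|_2$ and the triangle inequality $\|T\|_2\le \|T-n^{-1}\sum_s\sigma(s)T\sigma'(s)^*\|_2+\|n^{-1}\sum_s\sigma(s)T\sigma'(s)^*\|_2$, a short manipulation yields the rigidity estimate
$$\vp\|T\|_{2}\le \max_{s\in S}\|\sigma(s)T-T\sigma'(s)\|_{2}\qquad \forall T\ne 0.$$

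Specializing to $T=U^*$ for $U\in U(d)$ (after identifying $H_\sigma=H_{\sigma'}=\CC^d$), and using $\|U^*\|_2=\sqrt d$ together with $\|\sigma(s)U^*-U^*\sigma'(s)\|_2=\|U\sigma(s)U^*-\sigma'(s)\|_2$ (by unitary invariance), we obtain $\max_{s\in S}\|U\sigma(s)U^*-\sigma'(s)\|_{2}\ge\vp\sqrt d$ for every $U\in U(d)$. Consequently, the $U(d)$-orbits of the $n$-tuples $(\sigma(s))_{s\in S}$ attached to pairwise inequivalent $d$-dimensional irreducibles form a $\vp\sqrt d$-separated family in $U(d)^n$ for the max Hilbert--Schmidt metric. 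A classical Riemannian volume comparison supplies an $\eta$-net of $U(d)^n$ of cardinality at most $(C\sqrt d/\eta)^{nd^2}$ (since $\dim_\RR U(d)=d^2$ and $U(d)$ has HS-diameter $O(\sqrt d)$); with $\eta=\vp\sqrt d/2$ this bounds the number of inequivalent $d$-dimensional irreducibles by $(2C/\vp)^{nd^2}=\exp(c_\vp nd^2)$. Summing over $1\le d\le N$ gives $r_N(G)\le N\exp(c_\vp nN^2)\le \exp(c'_\vp nN^2)$.

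The main technical point is the rigidity step, which requires noticing that the spectral gap applied to the specific tensor product $\sigma\otimes\overline{\sigma'}$ (together with Schur's lemma to certify the absence of invariant vectors) forces an $\Omega(\vp\sqrt d)$ separation between $U(d)$-orbits. The subsequent volume-count and the final summation over $d$ are routine; the only care needed is to absorb the polynomial factor $N$ into the exponential by slightly increasing $c_\vp$.
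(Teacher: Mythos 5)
Your overall strategy --- Schur's lemma to certify that $\sigma\otimes\overline{\sigma'}$ has no invariant vectors, the spectral gap applied to that tensor product to force a quantitative separation of the tuples $(\sigma(s))_{s\in S}$, and a packing/volume count --- is exactly the paper's (the paper proves it for a general $\pi$ with a gap for $\pi\otimes\bar\pi$ and specializes to $\lambda_G$, packing the normalized tuples in the unit sphere of $M_N^n$ rather than covering $U(d)^n$ as a manifold, but these are cosmetic differences). However, there is one step in your rigidity argument that is false as written: the inequality
$$\Bigl\| n^{-1}\sum_{s\in S}\sigma(s)\otimes\overline{\sigma'(s)}\Bigr\|\le 1-\vp$$
does not follow from $\vp(G,S)\ge\vp$. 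The quantity $\lambda(\rho,S)$ only bounds $\sup_{\|\xi\|=1}\Re\langle A\xi,\xi\rangle$ for $A=n^{-1}\sum_s\rho(s)$, i.e.\ the top of the real part of the numerical range; it says nothing about the bottom of the spectrum, which may reach $-1$ (the paper's abstract explicitly allows spectrum in $[-1,1-\vp]$; already for $G=\Z/2$ with $S$ the nontrivial element and $\sigma$ the sign representation one has $A=-1$, of norm $1$). Your subsequent triangle-inequality manipulation $\|T\|_2\le\|T-AT\|_2+\|AT\|_2\le\|T-AT\|_2+(1-\vp)\|T\|_2$ genuinely uses this false norm bound, so the derivation of the rigidity estimate breaks.

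The conclusion you want survives, but needs a different (and in fact shorter) derivation from the correct hypothesis: from $\Re\langle A T,T\rangle\le(1-\vp)\|T\|_2^2$ one gets $\Re\langle (I-A)T,T\rangle\ge\vp\|T\|_2^2$, hence by Cauchy--Schwarz $\|(I-A)T\|_2\ge\vp\|T\|_2$, and then your identity $T-\sigma(s)T\sigma'(s)^*=(T\sigma'(s)-\sigma(s)T)\sigma'(s)^*$ gives $\vp\|T\|_2\le\max_s\|\sigma(s)T-T\sigma'(s)\|_2$ as desired. (The paper sidesteps the issue entirely by testing the quadratic form only at the single vector $\xi=d^{-1/2}I$, obtaining $\Re\sum_s\mathrm{tr}(\sigma(s)^*\sigma'(s))\le(1-\vp)nd$ and hence a $\sqrt{2\vp}$-separation of the normalized tuples, which is all the packing argument needs; your full orbit separation is more than necessary.) With that repair, the remaining steps --- the covering number $(C\sqrt d/\eta)^{O(nd^2)}$ for $U(d)^n$ and the summation over $d\le N$ --- are correct and routine.
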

\begin{rem} Note that it suffices
to prove a bound of the same form for
the number of distinct irreducible unitary representations
$\sigma:\ G\to B(H_\sigma)$ with $\dim(H_\sigma)= N$.
Indeed, if the latter number is denoted by $s_N(G)$,
we have
$r_N(G)=\sum_{d=1}^N  s_d(G)$, so that it suffices to have
a bound of the form  $s_d(G)\le \exp{(c'_\vp nd^2)}$ to 
obtain \eqref{bd1}. \\
See \cite{KN,LL} for some examples of estimates of the growth of $r_N (G)$. 
\end{rem}
We note that it was originally proved by Wang \cite{W} that
for any Kazhdan-group $G$ this number $r_N(G)$ is finite for any $N$.
There is an indication of proof of \eqref{bd1}  in \cite{Was}, and detailed proofs appear in \cite{HRV} (see also \cite{MW}).
We will prove a simple extension of this bound below.

Recall that a sequence $(G_k,S_k)$ of finite groups
equipped with generating sets $S_k\subset G_k$ such that
 $$\sup_k|S_k|< \infty,\quad |G_k|\to \infty \quad{\rm and}\quad \inf_{k}  \vp(G_k,S_k)   >0$$
is called an expander or an expanding family. This corresponds to the usual notion
 among {\it Cayley} graphs to which we restrict the entire discussion.\\
 Let $\hat G$ denote as usual the (finite) set of all irreducible unitary representations of a finite group $G$
 (up to unitary equivalence).\\
 We note in passing that
 it is well known
 (and this   also can be derived from  Proposition \ref{p1}) that any   expander satisfies
 \begin{equation}\label{bd3pre}
 \lim_{k\to \infty} \max\{\dim(H_\sigma)\mid \sigma\in \hat {G_k} \} = \infty.
 \end{equation}
 
  We refer the reader to the surveys \cite{HLW,Lu2} for more information on expanders.
 
The question raised by Wigderson in this context can be formulated as follows:

Let
$$R_{n,\vp}(N)=\sup\{ r_N(G)\}$$
where the supremum runs over all finite groups $G$ admitting a subset $S$
with  $ |S|=n$ such that $\vp(G,S)\ge \vp$. Actually the question is just as interesting for
arbitrary (Kazhdan) groups $G$, but it is more natural to restrict  to finite groups, because
there are infinite Kazhdan groups without {\it any} (nontrivial) finite dimensional representations.

Moreover, since, for a finite group $G$,   all   representations are weakly contained in the 
left regular representation $\lambda_G$, we have clearly by \eqref{bd0}
\begin{equation}\label{bd2} \vp(G,S)=\vp(\lambda_G,S).\end{equation}
 
 By \eqref{bd1}, we have
\begin{equation}\label{bd3}   R_{n,\vp}(N)  \le \exp{(c_\vp nN^2)}.\end{equation}
and a fortiori simply 
$ R_{n,\vp}(N) = \exp {O(N^2)}.$\\
Wigderson asked whether this upper bound can be improved.
More explicitly, what is the precise order of growth
of $\log R_{n,\vp}(N) $ when $N \to \infty$. Does it grow   like
$N$    rather than  like $N^2$ ? \\
The motivation for this question can be summarized like this: In \cite[Th. 1.4]{MW} an exponential
bound  $\exp {O(N)} $ is proved for a special class of groups $G$
(namely monomial groups), admitting a fixed spectral gap
with generating sets of very slowly growing size (but not bounded)
and it is asked whether the same exponential bound holds in general for $ R_{n,\vp}(N) $.
Moreover, in a remark following the proof of \cite[Th. 1.4]{MW},
Meshulam and Wigderson observe that
 for     any prime number $p>2$, there is a group
$G_p$ with a generating set of (unbounded) size $\log p$ 
admitting a fixed spectral gap and such that $r_p(G)\approx 2^p/p$.

 \begin{rem} By classical results, originating in the works of Kazhdan and Margulis (see e.g. \cite{Lu} or \cite[Cor. 2.4]{Lu2}), 
 for any fixed $m\ge 3$,  the family
 $\{SL_m(\Z_p)\mid p \ {\rm prime} \}$  is an expander, so that
 we have (for suitable ${\ell,\delta}$)
 $$R_{\ell,\delta}(N) \ge \sup\nolimits_p  r_N(   SL_m(\Z_p) ).$$
 Similarly, let ${\cl G}_k$ denote the symmetric group of all permutations of a $k$ element set.
  Kassabov \cite{Kas} proved that the family $\{  {\cl G}_k\mid k\ge 1\}$ forms
 an expanding family with respect to subsets $S_k\subset {\cl G}_k$ of a fixed size $\ell$
 and a fixed spectral gap $\delta>0$.
 Thus we find a lower bound
 $$R_{\ell,\delta}(N) \ge \sup\nolimits_k  r_N(   {\cl G}_k ).$$
 Quite remarkably,  it is proved in \cite{KLN} that the   family itself of {\it all}
 non-commutative finite simple groups forms an expander (for some suitable $n,\vp$).
 \end{rem}
  \begin{rem}
However, it seems  the resulting  lower bounds
 are still far from being exponential in $N$.
 Actually, in many important cases (see {\it e.g.} \cite{BG}), the proof that certain finite groups $G$
 give rise to  expanders uses the fact that  the smallest dimension
 of a (non-trivial) irreducible representation on $G$ is $\ge c|G|^a$
 for some $a>0$. 
  Then since $|G|=\sum_{\pi\in \hat G} \dim(\pi)^2$ 
   the cardinal of $\hat G$  is bounded above by $|G|^{1-2a}/c^2$. Therefore, for any $N\ge c|G|^a $   we have $r_N(G)\le |G|^{1-2a}/c^2 \le c' N^{(1/a)-2}$, so that the resulting growth implied for $R_{n,\vp}(N)$ is at most polynomial in $N$.
 (I am grateful to N. Ozawa for drawing my attention to this point).
 \end{rem}
 Nevertheless, we have:
 
  \begin{rem}\label{comkas} (Communicated by Martin Kassabov).
 For suitable ${n,\vp}$ the numbers $R_{n,\vp}(N)$
 grow faster than any power of $N$. In fact, we will prove the\\
{\bf Claim\ :}  There
 is an expanding family of Cayley graphs $(G_k)$ of groups 
 generated by 3 elements with a positive spectral gap $\vp$
 and such that 
 for $N_k=2^{3k}-2$, 
 $G_k$ admits $2^{k^2}$ distinct irreducible representations
 of dimension $ N_k$. \\
 From this claim
 follows that $R_{3,\vp}(N_k)\ge 2^{k^2}\ge 2^{(\log(N_k))^2}$,
 say for all $k$ large enough, and hence
 $$R_{n,\vp}(N)\ge 2^{(\log(N))^2} \ \text{for infinitely many $N$'s.}$$  
 To prove the claim we use the ideas from \cite{Kas1}.
 Let $\cl R_k$ denote the (finite) ring $M_k(F_2)$
 of $k\times k$ matrices with entries in the field with 2 elements.\\
 It is known  
 that the cartesian product $ \Pi_k= {\cl R}_k^{2^{k^2}}$ of $|\cl R_k|=2^{k^2}$ copies
 of $\cl R_k$ is generated by 3 elements. 
 Indeed, $\cl R_k$ itself is generated as a ring
 by two elements, e.g. $a=e_{12}$
 and the shift $b=e_{12}+e_{23}+\cdots+e_{k-1 k}+e_{k1}$,
 then $ \Pi_k$ is generated as a ring by $\{A,B,C\}$
 where $A$ (resp. $B$) is the element with all coordinates equal to $a$
 (resp. $b$), and $C$ is such that its coordinates are in
 one to one correspondence with the elements of $\cl R_k$.
 To check this,
  let $R\subset \Pi_k$ be the ring generated
 by $\{A,B,C\}$. Note, by the choice of $C$, the following easy observation:
 for any coordinate $i$, there is $x\in R$
 such that $x_i=0$ but $x_j\not=0$ for all $j\not=i$.
 For any subset $I$ of the index set
 let $p_I:\ R\to {\cl R}_k^I$ be the coordinate projection.
One can then prove by induction on $m=|I|$ that
$p_I(R)={\cl R}_k^I$ for all $I$. Indeed, assume 
the fact established for $m-1$.
For any $I$ with $|I|=m$  we pick $i\in I$ and we consider
the set ${\cl I}=\{ y\in {\cl R}_k^{I\setminus i}\mid  (0,y)\in p_I(R)\}$.
By the induction hypothesis,  ${\cl I}$ is an ideal in
${\cl R}_k^{I\setminus i}$, but,   
  since ${\cl R}_k$ is simple, the above observation
  implies that ${\cl I}=  {\cl R}_k^{I\setminus i}$,
  and since $a,b$ generate ${\cl R}_k$ we have $p_{\{i\}}(R)={\cl R}_k $, so we obtain
    $p_I(R)={\cl R}_k^I$.
 
This implies that
 the free associative ring $\Z\langle x,y,z\rangle$
 (in 3 non-commutative variables) 
 can be mapped onto the  product $\Pi_k$.
 Consider now the group $EL_3(\Z\langle x,y,z\rangle)$
 generated by the elementary matrices in $GL_3(\Z\langle x,y,z\rangle)$.
 This is a noncommutative universal lattice
 in the terminology of \cite{Kas1,EJZ}.
 First observe that $EL_3(\Z\langle x,y,z\rangle)$
 is generated by $3$ elements. Indeed, let $\alpha,\beta$
 generate $SL_3(\Z)$. Then $\alpha,\beta,\gamma$
 will generate $EL_3(\Z\langle x,y,z\rangle)$
 where $\gamma=\left(\begin{matrix} 1\ x\ y\\
 0\ 1\ z\\ 0\ 0\ 1
 \end{matrix}\right)$. Moreover, by \cite[Th.1.1]{EJZ}
 $EL_3(\Z\langle x,y,z\rangle)$
 has Kazhdan's property T.
 It follows that the groups
 $$G_k=EL_3(\Pi_k) $$
 have expanding generating sets with 3 elements.
 But it turns out that $G_k$  can be identified with  the product 
 $$SL_{3k}(F_2) ^{2^{k^2}}.$$
 Indeed, firstly one
 easily   checks the natural isomorphism
 $EL_3({\cl R}_k^{2^{k^2}})\simeq  EL_3({\cl R}_k)^{2^{k^2}}$,
 secondly it is well known that, since $F_2$ is a field,
 $EL_n(F_2)=SL_n(F_2)$ for any $n$, 
 and hence (taking $n=3k$) we have a natural isomorphism
  $EL_3({\cl R}_k)= SL_{3k}(F_2)$; this
  yields the identification $G_k=SL_{3k}(F_2) ^{2^{k^2}}.$
  
 To conclude, we will use the fact that $SL_{3k}(F_2)$
 admits a nontrivial irreducible representation  $\pi$ with dimension $N_k=2^{3k}-2$.
 (Just consider its action by permutation on the projective space,
 which has $2^{3k}-1$ elements; the action
   is transitive and doubly transitive, therefore
   the associated Koopman representation
  $\pi$ is irreducible and of dimension $2^{3k}-2$).
 This immediately produces ${2^{k^2}}$ distinct
    irreducible representations of dimension  $N_k$
 on $SL_{3k}(F_2) ^{2^{k^2}}.$ Indeed, it is an elementary fact
 that if $\Gamma=\Gamma_1\times\cdots\times \Gamma_m$ is a product group, and if
 $\pi_1,\cdots \pi_m$ are arbitrary
 nontrivial irreducible representations
 on the factor groups $\Gamma_1,\cdots, \Gamma_m$, then
 the representations $\tilde \pi_j$ defined on $\Gamma$
 by $\tilde \pi_j(g)= \pi_j(g_j)$ are distinct 
 (meaning not unitarily equivalent),  irreducible  on $\Gamma$
 and $\dim(\tilde \pi_j)=\dim(  \pi_j)$ for any $j$.
 So taking all $\Gamma_j$'s equal to $SL_{3k}(F_2)$, with $\pi_j=\pi$
 and $m={2^{k^2}}$, we obtain the announced claim.
 \end{rem}

In any case, the problem of finding the correct behaviour
 of $  \log R_{n,\vp}(N) $ (or of $R_{n,\vp}(N) $ itself) when $N \to \infty$ appears to be still
 wide open.

   In this paper we consider a modified version of this question involving 
``quantum expanders" and  show that for this (much easier) modified version, $N^2$ is the correct order of growth.

The term  ``quantum expander" was introduced in \cite{Ha}  and     \cite{BA1,BA2}, independently, to designate
a sort of non-commutative, or matricial, analogue of expanders, as follows.

Fix an integer $n$. 
Consider an
$n$ tuple of $N \times N$ unitary matrices, say  $u=(u_j)\in U(N)^n$.
We view each of  them $u_j$ as a linear operator on the $N$-dimensional Hilbert space $H$. Then $u_j\otimes \ovl{u_j}$
is naturally viewed as a linear operator on
 the (Hilbert space sense)  tensor product $H\otimes \bar H$.
Using the (canonical) identification $H^*\simeq \bar H$, the tensor product
$H\otimes \bar H$ can be isometrically identified with the space
of linear operators from $H$ to $H$ equipped with the Hilbert-Schmidt norm
denoted by $\|\ \|_2$ (sometimes called the Frobenius norm in the present finite dimensional context). Then, the identity operator $Id_H:\ H\to H$
defines a distinguished element of $H\otimes \bar H$ that we   
 denote   by $I$.

\def\tr{{\rm tr}}
We set
$$\lambda(u)=n^{-1}\sup\{\Re \langle (\sum_1^n  u_j \otimes \bar u_j )\xi,\xi\rangle \mid \xi\in H\otimes \bar H ,\  \xi \perp  I, \ \|\xi\|_{H\otimes \bar H}=1      \},   $$
and
$$\vp(u)=1-\lambda(u).$$
In other words,  with the preceding identifications, the condition $\vp(u)\ge \vp$ means that
for any $x\in M_N$ with $\tr(x)=0$  we have
$$ \Re \sum \tr (u_j xu_j^* x^*)  \le (1-\vp) \|x\|_2,$$
where $\|x\|_2= (\tr(x^*x))^{1/2}$.\\
When $T=\sum_1^n  u_j \otimes \bar u_j$ is self adjoint
(in particular when the set $\{u_1,\cdots ,u_n\}$ is selfadjoint)
the real part $\Re$ can be omitted in the two preceding lines.

In group theoretic language, if $\pi:\ {\bf F}_n \to U(N)$
is the group representation on the free group ${\bf F}_n$, equipped with
a set of $n$ free generators
$S=\{g_1,\cdots,g_n\}$, such
that  $\pi(g_j)=u_j$ ($1\le j\le n$), then we have
$$\vp(u)=\vp(\pi\otimes \ovl{\pi},S).$$

\begin{dfn}  A sequence $\{ u(k)\mid k\in \NN\}$ 
with each $u(k)\in U(N_k)^n$ such that $N_k\to \infty$
(with $n$ remaining fixed) and $\inf_{k}  \{\vp( u(k)) \} >0$
is called a quantum expander.
We say that   $n$ is  its degree and
 $\inf_{k}  \{\vp( u(k)) \} >0$ its  spectral gap.\end{dfn} 

\begin{rem}
The existence of quantum expanders can be deduced as follows from
that of expanders. Recalling \eqref{bd2}, assume given
a finite group $G$ and  $S\subset G$ as before such that $\vp(G,S)=\vp(\lambda_G,S)\ge \vp>0$.
Recall that each $\sigma\in \hat G$ is contained in  $\lambda_G$.
Let $\pi\in \hat G$. Since any representation on $G$ without invariant vectors, being
  a direct sum of non trivial irreps, is weakly contained in $\lambda_G$, the representation
  $\rho=\pi\otimes \ovl{\pi}$ restricted to ${H_\rho^{{\rm inv}}}^\perp$ is weakly contained in
  the non trivial part of  $\lambda_G$.
  In particular, we have by \eqref{bd0}
  $$\lambda(\rho,S)  \le \lambda(\lambda_G,S)   . $$
  Therefore, we have
  $$\vp( \pi\otimes \ovl{\pi},S)\ge \vp( \lambda_G,S)\ge \vp.$$
   Thus if we are given an expander   $(G_k,S_k)$ as above, 
   say with $S_k=\{s_1(k),\cdots,s_n(k)\}$,  we can choose
   by \eqref{bd3pre} $\sigma_k\in \hat G_k$ such that
   $\dim(H_{\sigma_k})\to \infty$, and  
   if we set $u_j(k)= \sigma_k(s_j(k))$    ($1\le j\le n$),
   then $u(k)=\{u_1(k),\cdots,u_n(k)\}$ forms a quantum expander.
\end{rem}

The next statement is a simple generalization 
of Proposition \ref{p1}
\begin{pro} \label{p2}
For any $0<\vp<1$ there is a constant $c'_\vp>0$ for which the following holds.
Let $G$ be any group and let $\pi:\ G \to B(H)$ be any unitary representation on a finite dimensional Hilbert space $H$.
Let us assume that there is an $n$-element subset $S\subset G$
and $\vp>0$ such that
$$\vp(\pi \otimes \ovl{\pi},S)\ge \vp .$$ In other words,
 $\pi$ satisfies the following spectral gap condition:
\begin{equation}\label{eq1}\lambda( \pi\otimes \ovl{\pi},S)
\le  1-\vp\end{equation}
Let $\pi=\oplus_{t\in T} \pi_t$
be the decomposition into {\it distinct} irreducibles (where each $\pi_t$ has multiplicity $d_t\ge 1$),
then
\begin{equation}\label{bd4}|\{t\in T\mid \dim (\pi_t)\le N\}|\le \exp{c'_\vp nN^2}.\end{equation}
\end{pro}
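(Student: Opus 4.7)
The plan is to turn the spectral-gap hypothesis into a uniform geometric separation between the unitary orbits of the tuples $u^{(t)} = (\pi_t(s_1),\dots,\pi_t(s_n)) \in U(d_t')^n$ (with $d_t' = \dim\pi_t$), and then conclude by a volume estimate in $M_d^n$ summed over $d\le N$. Passing to $S\cup S^{-1}$ if necessary, I may assume $S$ is symmetric at the cost of at most doubling $n$, which renders all operators in sight self-adjoint.

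Writing $\pi = \bigoplus_{t\in T}\pi_t$ as distinct irreducibles, each $\pi_s\otimes\ovl{\pi_t}$ is a subrepresentation of $\pi\otimes\ovl{\pi}$, so \eqref{bd0} gives $\vp(\pi_s\otimes\ovl{\pi_t},S)\ge\vp$. For $s\ne t$, Schur's lemma yields $H^{\rm inv}_{\pi_s\otimes\ovl{\pi_t}}=0$, and the spectral gap simply reads $\|n^{-1}\sum_j \pi_s(s_j)\otimes\ovl{\pi_t(s_j)}\|\le 1-\vp$; by unitary invariance the same bound holds after replacing $\pi_t$ by any conjugate $w\pi_t(\cdot)w^*$.

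Next, assume $d_s' = d_t' = d$ and identify $H_{\pi_s}\otimes\ovl{H_{\pi_t}}$ with $(M_d,\|\cdot\|_2)$ so that $u\otimes\bar v$ acts on $T\in M_d$ as $T\mapsto uTv^*$. Applied to $T=w\in U(d)$ (with $\|w\|_2=\sqrt d$), the inequality above becomes $\|n^{-1}\sum_j u_j^{(s)} w (u_j^{(t)})^*\|_2 \le (1-\vp)\sqrt d$. Combining this with $|\tr(A)|\le \sqrt d\,\|A\|_2$ (Cauchy--Schwarz against $w$) and the identity $\|u_j^{(s)} - wu_j^{(t)} w^*\|_2^2 = 2d - 2\Re\tr((u_j^{(s)})^* wu_j^{(t)} w^*)$, averaging over $j$ yields the separation lemma: for every $w\in U(d)$,
$$
\max_j\,\|u_j^{(s)} - w\,u_j^{(t)}\,w^*\|_2 \ge \sqrt{2\vp d}.
$$
In the normalized Hilbert--Schmidt norm $\|x\|_2'=d^{-1/2}\|x\|_2$ (in which $U(d)$ sits in the unit ball), this is the dimension-free separation $\sqrt{2\vp}$.

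A standard volume estimate in the unit ball of the real $2nd^2$-dimensional space $(M_d^n,\max_j\|\cdot\|_2')$ bounds the size of any $\sqrt{2\vp}$-separated subset by $(1+2/\sqrt{2\vp})^{2nd^2}\le \exp(c_\vp nd^2)$. Since the representatives $\{u^{(t)}\mid\dim\pi_t=d\}$ form such a set, summing over $d=1,\dots,N$ yields $|\{t\in T\mid\dim\pi_t\le N\}|\le N\exp(c_\vp nN^2)\le \exp(c'_\vp nN^2)$, which is \eqref{bd4}. The main obstacle is the separation step: cleanly converting the operator-norm spectral gap on $\pi_s\otimes\ovl{\pi_t}$ into a geometric lower bound that is uniform over the conjugation $w\in U(d)$, which is the core of the metric-entropy approach of \cite{P}.
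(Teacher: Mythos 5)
Your proposal follows essentially the same route as the paper: Schur's lemma shows that for distinct irreducibles $\pi_s\not\simeq\pi_t$ the representation $\pi_s\otimes\ovl{\pi_t}$ has no invariant vectors, the spectral gap passes to it by \eqref{bd0}, this forces the generator tuples to be $\sqrt{2\vp}$-separated in normalized Hilbert--Schmidt norm, and a packing (volume) bound in real dimension $O(nd^2)$, summed over $d\le N$, finishes the proof. There is, however, one step that fails as stated: the hypothesis $\lambda(\pi_s\otimes\ovl{\pi_t},S)\le 1-\vp$ only bounds $\sup_{\|\xi\|=1}\Re\,\langle (n^{-1}\sum_j\pi_s(s_j)\otimes\ovl{\pi_t(s_j)})\xi,\xi\rangle$ from above; it does \emph{not} give the operator-norm bound $\|n^{-1}\sum_j\pi_s(s_j)\otimes\ovl{\pi_t(s_j)}\|\le 1-\vp$ that you invoke, because even after symmetrizing $S$ the resulting self-adjoint operator has its spectrum bounded above by $1-\vp$ but possibly reaching down to $-1$, so its norm can be close to $1$. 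Fortunately your separation lemma needs only the one-sided bound: plugging $\xi=d^{-1/2}w$ directly into the quadratic form gives $n^{-1}d^{-1}\Re\sum_j{\rm tr}(w^*u_j^{(s)}w(u_j^{(t)})^*)\le 1-\vp$, and the expansion $\sum_j\|u_j^{(s)}-wu_j^{(t)}w^*\|_2^2=2nd-2\Re\sum_j{\rm tr}(w^*u_j^{(s)}w(u_j^{(t)})^*)\ge 2nd\vp$ yields exactly your separation, bypassing the operator-norm and Cauchy--Schwarz detour. With that repair the argument is complete; note also that the uniformity over $w\in U(d)$, while true and a genuine strengthening, is not needed for the packing bound --- the case $w=I$ for one fixed choice of representatives (which is what the paper uses, with the single Hilbertian norm $N^{-1}n^{-1}\sum_j{\rm tr}(x_j^*x_j)$ in place of your $\max_j$ norm) already suffices.
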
 
  \begin{proof}
  Let $\sigma=\oplus_{t\in T} \pi_t$ be the direct sum where each component is included with multiplicity equal to $1$.
  We may clearly view $\sigma$ as a subpresentation of $\pi$, acting on a subspace $K\subset H$
  so that the orthogonal projection $Q:\ H\to K$ is intertwining, i.e. satisfies
  $Q\pi=\sigma Q$. Then we also have $(Q\otimes \bar Q)(\pi\otimes \bar \pi)=(\sigma\otimes \bar \sigma)(Q\otimes \bar Q)$,
  from which it is easy to derive that if we denote   $V_\pi=H_{\pi\otimes \bar \pi}^{{\rm inv}}$,
  we have $(Q\otimes \bar Q) V_\pi=V_\sigma$ and $(Q\otimes \bar Q) V^\perp_\pi=V^\perp_\sigma$.
  This implies 
$$\lambda( \sigma\otimes \ovl{\sigma},S)\le \lambda( \pi\otimes \ovl{\pi},S)\le 1-\vp
.$$
Thus,  replacing $\pi$ by $\sigma$, we may  as well assume that the multiplicities $d_t$   are all equal to $1$.

  Let $H=\oplus_{t\in T} H_t $ denote the decomposition corresponding to $\pi=\oplus_{t\in T} \pi_t $.
   We have $\pi\otimes \bar \pi=\oplus_{t,r\in T} \pi_t \otimes\ovl{ \pi_r}$,
    with associated decomposition $H\otimes \bar H=\oplus_{t,r\in T}  H_t\otimes \ovl{H_r}$. From this 
    follows that the subspace $V_\pi\subset H\otimes \bar H$  of $\pi\otimes \bar \pi$-invariant vectors is equal to
    $\oplus_{t,r\in T}  V_{t,r}$ where $V_{t,r}\subset H_t\otimes \ovl{H_r}$ is the  subspace
    of invariant vectors of $   \pi_t \otimes\ovl{ \pi_r}$. Since for any $t\not= r\in T$,  $\pi_t\not \simeq \pi_r$, by Schur's lemma     
    $V_{t,r}=\{0\}$, and hence $V_\pi\subset \oplus_{t\in T} V_{t,t}$.  In particular,
    this shows
    that   $$\forall  t\not= r\in T \quad H_t\otimes \ovl{H_r}\subset V_\pi^\perp.$$   
   Let
   $T'=\{t\in T\mid \dim (\pi_t)=N\}.$
   It suffices to show an estimate of the form
   \begin{equation}\label{eqbd5}|T'|\le \exp{c_\vp nN^2}.\end{equation}
   Let $\cl H$ be the Hilbert space obtained by equipping $M_N^n$ with the norm
   $$\|x\|^2_{\cl H}=   N^{-1} n^{-1}\sum_1^n{\rm tr} (x_j^*x_j)   .$$  
   Let $S=\{s_1,\cdots,s_n\}$.
   For any $t\in T' $ we define $x(t)\in M_N^n$ by
   $$x(t)_j= \pi_t(s_j) \quad 1\le j\le n.$$
   Note that, by our normalization,  $\|x(t)\|_{\cl H}=1$ for any $t\in T'$. Moreover,
   since for any $t\not= r\in T$  $\pi_t\not \simeq \pi_r$, by Schur's lemma the representation 
    $\pi_t\otimes \ovl{ \pi_r}$ has no invariant vector, and hence lies inside $(\pi \otimes \ovl{ \pi})_{|V_\pi^\perp}$.
 Therefore, by \eqref{bd0}
      $$\lambda( \pi_t\otimes \ovl{ \pi_r},S)\le 
      \lambda( \pi\otimes \ovl{ \pi},S),
      $$
      and hence for any unit vector $\xi \in H_{\pi_t} \otimes \ovl{H_{\pi_r}}$ we have
      $$n^{-1} \Re ( \sum_{s\in S} (\pi_t \otimes \ovl{\pi_r}) \xi ,\xi \rangle)\le 1-\vp .  $$
    In particular, if $t\not= r\in T'$, we may realize $\pi_t,{ \pi_r}$ as representations on the same $N$-dimensional space, so that taking $\xi=N^{-1/2} I $ we find
    $$ \Re \langle  x(t)   ,x(r)     \rangle_{\cl H} 
      = (nN)^{-1}  \Re\left( \sum_{s\in S}{\rm tr} (\pi_t(s)^*\pi_r(s)) \right) \le  1-\vp,$$
    which implies
    $$\|x(t) -x(r)\|_{\cl H}\ge \sqrt{2\vp}.$$
    Thus we have $|T'|$ points in the unit sphere of ${\cl H}$ that are $\sqrt{2\vp}$-separated. Since $\dim({\cl H})=nN^2$,
    \eqref{eqbd5} follows immediately by a well known elementary volume argument (see e.g.
    \cite[p. 57]{Pvol}).
    \end{proof}
     \begin{rem} To derive Proposition \ref{p1} from the preceding statement,
     consider, in the situation of Proposition \ref{p1}, a finite set $\{\sigma_t\mid t\in T\}$ of distinct finite dimensional 
     irreducible representations of $G$, 
       let $\pi $ be their direct sum
     and let $\rho=\pi\otimes \ovl{\pi}$.
     By the assumption in  Proposition \ref{p1}, we know $\vp(\rho,S)\ge \vp$, and hence \eqref{bd4}
      implies $|T|\le  \exp{c'_\vp nN^2}$. Applying this to $\pi=\lambda_G$,
     this shows that Proposition
     \ref{p2} contains Proposition
     \ref{p1}.
    \end{rem}
    
    For any  finite dimensional unitary representation $\pi:\ G \to B(H)$ on an arbitrary group,     let us denote by $r'_N(\pi)$ the number of distinct irreducible representations appearing in the decomposition of $\pi$ of dimension at most $N$. Let then
    $$R'_{n,\vp}(N)=\sup  r'_N(\pi)$$
    where the sup runs over all $\pi$'s and $G$'s admitting an 
     $n$-element generating set $S\subset G$ such that
     $$\vp(\pi\otimes \bar \pi,S)\ge \vp.$$
     Note that $r'_N(\lambda_G)=r_N(G)$ and hence
     $$R_{n,\vp}(N)\le R'_{n,\vp}(N).$$
     With this notation \eqref{bd4}
     means that
     $$R'_{n,\vp}(N)\le \exp{c'_\vp nN^2}.$$
     
     While it seems very difficult to give a good lower bound for $R_{n,\vp}(N)$, 
     we can answer the analogous question for $R'_{n,\vp}(N)$: Indeed, 
     the main result of \cite{P} (see \cite[Th. 1.3]{P}),   which follows,  implies
     the desired lower bound when reformulated in terms of representations.
     
\begin{thm}[\cite{P}]\label{goal}  For each $0<\vp<1$, there is a 
  constant $ \beta_\vp>0$   such that    and
for all sufficiently large integer $n$ (i.e. $n\ge n_0$   with $n_0$ depending
on   $\vp$) and for all  $N\ge 1$,  there is a   subset 
${\cl T}\subset U(N)^n$  with 
$$|{\cl T}|\ge \exp{\beta_\vp  nN^2}$$
  such that  
$$ \forall u\not= v\in {\cl T}\quad  \|\sum\nolimits_1^n  u_j \otimes \ovl{ v_j}\|\le n(1-\vp)\quad ({\rm we\  call\  these\ } ``\vp-{\rm separated}" ) ,$$
and $\vp(u)\ge \vp$
for all $u\in {\cl T}$  
   (we  call  these  ``$\vp$-quantum \ expanders"). \\
More precisely, for all $u\in {\cl T}$ we have
$$\|(\sum u_j \otimes \ovl{u_j})_{|I^\perp} \|\le n(1-\vp).$$
\end{thm}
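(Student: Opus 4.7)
The plan is a probabilistic construction. Fix $\vp\in(0,1)$ and $n\geq n_0(\vp)$. Draw $M := \lceil e^{\beta_\vp nN^2}\rceil$ independent tuples $u(1),\dots,u(M) \in U(N)^n$ with each coordinate Haar-distributed on $U(N)$, and seek to show that with positive probability all $u(k)$ are $\vp$-quantum expanders and all pairs are $\vp$-separated. Such a successful sample is then the desired set $\mathcal{T}$.

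For the expected values, the Haagerup--Thorbj\o rnsen and Collins--Male strong convergence theorems yield $\E\|(\sum_j u_j\otimes\bar u_j)_{|I^\perp}\|\to 2\sqrt{n-1}$ as $N\to\infty$, with analogous $O(\sqrt n)$ control for the off-diagonal quantity $\E\|\sum_j u_j\otimes \bar v_j\|$ (for independent Haar $u$ and $v$). Choose $n_0(\vp)$ large enough that both limits lie strictly below $n(1-\vp)$ uniformly in $N$ (folding in the finite-$N$ error terms of the strong convergence).

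The key quantitative step is a tail estimate of the form
\[
\Pr\bigl(\|(\textstyle\sum_j u_j\otimes\bar u_j)_{|I^\perp}\| > n(1-\vp)\bigr) \;\leq\; e^{-\gamma_\vp nN^2},
\]
and likewise for the off-diagonal functional, valid for some $\gamma_\vp>0$ and all $n\geq n_0(\vp)$. Granting this, the union bound over the $M$ expander events and $\binom{M}{2}$ separation events delivers a success probability $\geq 1-(M+M^2)e^{-\gamma_\vp nN^2}$, which is positive whenever $\beta_\vp<\gamma_\vp/2$, proving existence.

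The hard part is securing the tail estimate with the exponent $nN^2$. A direct application of Gromov concentration on $U(N)^n$ (log-Sobolev constant of order $N$ per factor, preserved under products by tensorization) to our $O(\sqrt n)$-Lipschitz functionals gives only the weaker tail $e^{-cNn}$ for deviations of order $n$ --- short by a full factor of $N$ in the exponent. To recover the missing factor one must exploit the $N^2$-dimensional structure of the ambient Hilbert space $\mathbb{C}^N\otimes\mathbb{C}^N$ on which $\sum u_j\otimes\bar v_j$ acts: a plausible route is a fine net on its unit sphere (of cardinality $e^{O(N^2)}$) combined with sharp non-commutative Khintchine-type moment bounds for each scalar matrix coefficient $\langle(\sum u_j\otimes\bar v_j)\xi,\eta\rangle$ at net points, thereby converting the single-factor Lipschitz concentration of strength $N$ into the desired $nN^2$-dimensional tail. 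This operator-space metric entropy estimate is the technical heart of \cite{P}.
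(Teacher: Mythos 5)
First, a point of reference: the paper under review does not actually prove Theorem \ref{goal} --- it is imported verbatim from \cite{P} --- so your attempt has to be measured against the argument in \cite{P}, which the introduction describes as a ``metric entropy estimate''.

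There is a genuine gap, and it sits exactly at the step you yourself flag as the hard part: the tail estimate $\Pr\bigl(\|\sum_j u_j\otimes \bar v_j\|>n(1-\vp)\bigr)\le e^{-\gamma_\vp nN^2}$ is \emph{false} for large $N$, so no net argument, Khintchine inequality, or other refinement can establish it. Indeed, for independent Haar $u,v\in U(N)^n$ consider the event that $\Re\,(u_j)_{11}>1-\vp/8$ and $\Re\,(v_j)_{11}>1-\vp/8$ for all $j$. Since the first column of a Haar unitary is uniform on the complex sphere, $\Pr(|(u_j)_{11}|^2>1-s)=s^{N-1}$, so this event has probability at least $c_\vp^{\,2nN}$ for some $c_\vp>0$. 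On this event the unit vector $\xi=e_1\otimes\bar e_1$ gives $\Re\,\langle(\sum_j u_j\otimes\bar v_j)\xi,\xi\rangle=\Re\sum_j (u_j)_{11}\overline{(v_j)_{11}}>n(1-\vp)$, hence $\|\sum_j u_j\otimes\bar v_j\|>n(1-\vp)$. So the pairwise failure probability is at least $e^{-CnN}$, which dwarfs $e^{-\gamma nN^2}$ once $N$ exceeds a constant depending on $\vp$. With $M=e^{\beta_\vp nN^2}$ i.i.d.\ samples the expected number of non-separated pairs is then at least $e^{2\beta_\vp nN^2-CnN}\to\infty$, and even a deletion argument can only extract $e^{O(nN)}$ pairwise separated tuples from an i.i.d.\ sample. (The vector $e_1\otimes\bar e_2\perp I$ shows similarly that the single-tuple expander event fails with probability at least $e^{-CnN}$, so your first union bound breaks too.) Your strong-convergence and $O(\sqrt n)$ expectation inputs are correct, but they control the bulk, not tails at the $e^{-nN^2}$ scale --- and the tails really are fatter than that.

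The exponent $nN^2$ in \cite{P} comes from a different mechanism: a volumetric packing argument in $U(N)^n$ (real dimension $nN^2$) modulo conjugation, driven by a rigidity lemma rather than a tail bound. If $\|\sum_j u_j\otimes\bar v_j\|>n(1-\vp)$, there is a Hilbert--Schmidt unit matrix $\xi$ with $\sum_j\|u_j\xi-\xi v_j\|_2^2=2n-2\Re\sum_j\mathrm{tr}(v_j^*\xi^*u_j\xi)<2n\vp$, i.e.\ an almost-intertwiner; when $u$ and $v$ are themselves quantum expanders this forces $\xi$ to be close to $N^{-1/2}w$ for a unitary $w$, hence $v$ close to $w^*uw$ in normalized Hilbert--Schmidt distance. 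Each ``bad set'' is therefore a thin tube around a single conjugacy orbit (dimension $N^2$), and a volume comparison in the $(n-1)N^2$-dimensional quotient produces $e^{\beta_\vp nN^2}$ separated points inside the large-measure set of $\vp$-quantum expanders furnished by a Hastings-type estimate. If you want to keep a probabilistic flavor, the randomness can only be used to land in that good set; the counting must go through the quotient-by-conjugation geometry, not through independence plus a union bound.
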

   \begin{thm}\label{p3} The estimate in Proposition \ref{p2} is best possible in the sense that for any $0<\vp<1$  there is
  a constant $\beta_\vp >0$ such that for any $n$ large enough (i.e. $n\ge n_0(\vp)$),  for any $N\ge 1$ there is a group $G$ and a finite dimensional representation $\pi$ on $G$ 
  satisfying \eqref{eq1} and 
  admitting a decomposition $\pi=\oplus_{t\in T} \pi_t$,  
     with  distinct  irreducibles $\pi_t$ each with multiplicity $1$ (or any specified value $\ge 1$) and acting on an $N$-dimensional space,  with
   $$|T|\ge \exp {\beta_\vp  nN^2}.$$
 \end{thm}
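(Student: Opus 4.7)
The plan is to convert the separated family of quantum expanders supplied by Theorem \ref{goal} into a single group representation whose irreducible decomposition has the required size. Let $\cl T \subset U(N)^n$ be the set given by Theorem \ref{goal}, so $|\cl T| \ge \exp(\beta_\vp n N^2)$, each $u \in \cl T$ is a $\vp$-quantum expander, and distinct $u,v \in \cl T$ are $\vp$-separated. Take $G = \mathbf{F}_n$ with free generating set $S = \{s_1,\dots,s_n\}$ and, for each $u \in \cl T$, let $\pi_u : \mathbf{F}_n \to U(N)$ be the representation with $\pi_u(s_j) = u_j$. My candidate is $\pi = \bigoplus_{u \in \cl T} \pi_u$, or more generally $\pi = \bigoplus_{u \in \cl T} \pi_u^{\oplus k}$ if a prescribed multiplicity $k \ge 1$ is desired.

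Three checks are needed. First, each $\pi_u$ must be irreducible: under the identification $H_u \otimes \bar H_u \simeq \mathrm{End}(H_u)$, the space of $\pi_u \otimes \bar\pi_u$-invariant vectors is exactly the commutant of $\pi_u(\mathbf{F}_n)$, which by Schur's lemma is one-dimensional iff $\pi_u$ is irreducible; any extra invariant unit vector $\xi \perp I$ would force $n^{-1}\langle \sum_j u_j \otimes \bar u_j\, \xi, \xi\rangle = 1$, contradicting $\vp(u) \ge \vp$. Second, the $\pi_u$ should be pairwise inequivalent: any unitary intertwiner $W : H_u \to H_v$, viewed as a unit vector in $H_v \otimes \bar H_u$, would be invariant under $\pi_v \otimes \bar\pi_u$ and force $\|\sum_j v_j \otimes \bar u_j\| = n$, contradicting the $\vp$-separation bound $n(1-\vp)$. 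Third, $\pi$ must inherit the spectral gap: decomposing $\pi \otimes \bar\pi = \bigoplus_{u,v \in \cl T} \pi_u \otimes \bar\pi_v$, the off-diagonal blocks $u \ne v$ have no invariant vector (by Schur plus inequivalence) and norm $\le n(1-\vp)$ by separation, while each diagonal block $\pi_u \otimes \bar\pi_u$ has invariant subspace $\CC \cdot I_{H_u}$ with norm $\le n(1-\vp)$ on its complement by the quantum expander property. Summing over blocks, the invariant subspace of $\pi \otimes \bar\pi$ is $\bigoplus_u \CC \cdot I_{H_u}$ and $\lambda(\pi \otimes \bar\pi, S) \le 1-\vp$, so \eqref{eq1} holds. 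The multiplicity $k$ variant reduces to the same estimate via $\pi_u^{\oplus k} \otimes \overline{\pi_u^{\oplus k}} \simeq (\pi_u \otimes \bar\pi_u) \otimes \mathrm{id}_{M_k}$, which multiplies the invariant subspace dimension by $k^2$ without changing the norm on its complement.

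The heavy analytic lifting already sits inside Theorem \ref{goal}, so there is no real obstacle at this stage; the only point requiring any care is the third check, that the spectral gap survives the direct sum, and it reduces cleanly to Schur's lemma combined with the two conditions (quantum-expander and $\vp$-separation) built into $\cl T$. Counting then gives $r'_N(\pi) = |\cl T| \ge \exp(\beta_\vp n N^2)$ distinct $N$-dimensional irreducibles inside $\pi$, which is the advertised bound.
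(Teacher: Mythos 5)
Your proposal is correct and follows essentially the same route as the paper: take the $\vp$-separated family of $\vp$-quantum expanders from Theorem \ref{goal}, form the direct sum representation, and deduce irreducibility from the quantum-expander condition and pairwise inequivalence from the separation condition (the paper realizes $G$ as the subgroup of $U(|T|N)$ generated by the block-diagonal unitaries rather than as $\mathbf{F}_n$, but the representations are the same). Your third check, that the spectral gap passes to the direct sum, is exactly the content of the paper's observation \eqref{bd0} and is handled correctly.
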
 
   \begin{proof} Fix $N>1$.  Let $T\subset U(N)^n$ be the subset appearing in Theorem \ref{goal}, i.e. $T$ is such that 
   $|T|\ge \exp {\beta_\vp nN^2}$ and
   $\forall t\not= r\in T$ we have
   \begin{equation}\label{eq2}\|\sum t_j \otimes \bar r_j \|\le n(1-\vp),\end{equation}
   and also
  \begin{equation}\label{eq3}\|(\sum t_j \otimes \bar t_j )_{|I^\perp}\|\le n(1-\vp).\end{equation}
   Let $s_j=\oplus_{t\in T} t_j \in U(m)$ with $m=|T| N$, and let $G\subset  U(m)$ be the subgroup  generated by $S=\{s_1,\cdots,s_n\}$. Note that $\pi(G)\subset    \oplus_{t\in T} M_N$. Let $\pi:\ G \to U(m)$ be the  inclusion map viewed as a representation on $G$.
   Let $P_t: \oplus_{t\in T} M_N \to M_N$ be the $*$-homomorphism corresponding to the projection
   onto the coordinate of index $t$.
   For any $t\in T$, let $\pi_t:\ G\to U(N)$ be the representation defined
   by $\pi_t = P_t(\pi)$.
   Then, by definition, we have $\pi =\oplus_{t\in T} \pi_t $.
   By the spectral gap condition \eqref{eq3} the commutant of  $\pi_t(S)$ (which is but the commutant of $\{t_1,\cdots,t_n\}$)
   is reduced to the scalars, so $\pi_t$ is irreducible, and by \eqref{eq2} for any $t\not= r\in T$
   the representations $\pi_t$ and  $\pi_r$ are not unitarily equivalent.
  \end{proof}  
       \begin{rem}
       In particular, this means that
         $\forall n\ge n_0(\vp)$ and $\forall N$
 $$R'_{n,\vp} (N)\ge \exp {\beta_\vp nN^2}.$$
 
       \end{rem}
       
       {\it Acknowledgement} I am indebted to 
       Martin Kassabov
       for letting me include Remark \ref{comkas} and
        very grateful for his patient explanations on its contents.


\begin{thebibliography}{100}
         
         
           \bibitem{BHV}     B. Bekka, P. de la Harpe and A. Valette,   Kazhdan's property (T). 
         Cambridge University Press, Cambridge, 2008.
           
            \bibitem{BA1} A. Ben-Aroya and A. Ta-Shma, Quantum expanders and the quantum entropy difference problem, 2007, arXiv:quant-ph/0702129. 
            \bibitem{BA2} A. Ben-Aroya,  O. Schwartz, and A. Ta-Shma,   Quantum expanders: motivation and constructions. Theory Comput. 6 (2010), 47-79.
        \bibitem{BG} J. Bourgain and A. Gamburd, Uniform expansion bounds
for Cayley graphs of $SL_2(F_p)$, Ann.  Math., 167 (2008), 625--642.
        
        \bibitem{EJZ}
  M. Ershov and A. Jaikin-Zapirain,   Property (T) for noncommutative universal lattices, Invent. Math. 179 (2010), no. 2, 303--347. 
  
               \bibitem{HRV}  P. de la Harpe,  A.G. Robertson  and A.  Valette,    On the spectrum of the sum of generators for a finitely generated group. Israel J. Math. 81 (1993),  65Ð96.
               
              \bibitem{Har}  A. Harrow,   Quantum expanders from any classical Cayley graph expander. Quantum Inf. Comput. 8 (2008),  715--721. 
               
            \bibitem{HHar}   M. Hastings and A. Harrow,  Classical and quantum tensor product expanders. Quantum Inf. Comput. 9 (2009),   336--360. 
            
             \bibitem{Ha} M. Hastings, Random unitaries give quantum expanders, Phys. Rev. A (3) 76 (2007), no. 3, 032315, 11 pp.
                        

         
         
       
            
     \bibitem{HLW} S. Hoory, N. Linial, and A. Wigderson,
     Expander graphs and their applications.
     Bull.
Amer. Math. Soc.
 43   (2006),  439-561.
     
      \bibitem{Kas}   M. Kassabov,   Symmetric groups and expander graphs. Invent. Math. 170 (2007), no. 2, 327Ð354. 
      
                \bibitem{Kas1}   M. Kassabov,
      Universal lattices and unbounded rank expanders, 
       Invent. Math. 170 (2007), no. 2, 297--326.

      
         \bibitem{KLN} M. Kassabov, A. Lubotzky and N. Nikolov,  Finite simple groups as expanders. Proc. Natl. Acad. Sci. USA 103 (2006),  6116--6119. 
  
     \bibitem{KN}  M. Kassabov and  N. Nikolov,  Cartesian products as profinite completions. Int. Math. Res. Not. 2006, Art. ID 72947, 17 pp.
      
     \bibitem{LL} M. Larsen and A. Lubotzky,
Representation growth of linear groups,  J. Eur. Math. Soc. 10 (2008), 351--390. 
           \bibitem{Lu} A. Lubotzky.
{\it Discrete groups, expanding graphs and invariant measures.}
Progress in Math. 125.
Birkh\"auser, 1994.
        \bibitem{Lu2} A. Lubotzky. Expander graphs
in pure and applied mathematics, Bull. Amer. Math. Soc. 49 (2012) 113--162.
        
         \bibitem{MW}   R. Meshulam and A. Wigderson, Expanders in group algebras. Combinatorica 24 (2004), 659-680.  
 \bibitem{Pvol} G. Pisier, {\it The volume of Convex Bodies and Banach
Space Geometry }. (Book) Cambridge University Press.1989.
    \bibitem{P} G. Pisier, Quantum Expanders and Geometry of Operator Spaces, J. Eur. Math. Soc. (JEMS) 16 (2014),   1183--1219.

 \bibitem{W}
P. S.  Wang, On isolated points in the dual spaces of locally compact groups. Math. Ann. 218 (1975), 19-34. 
     \bibitem{Was} S. Wassermann,  $C^*$-algebras associated with groups with Kazhdan's property T. Ann. of Math.  134 (1991),  423-431.
  
   \bibitem{Wi} A. Wigderson, lecture notes for the 22nd
mcgill invitational workshop on
computational complexity,
Bellairs Institute
Holetown, Barbados
Lecturers:
Ben Green and
AviWigderson.
 
  
  
  
  \end{thebibliography}
 \end{document}